\title{ Limits of Generalized Quaternion Groups}
\author{R. Hobbi}
\address{R. Hobbi: Department of Pure Mathematics,  Faculty of Mathematical
Sciences, University of Tabriz, Tabriz, Iran}
\author{M. Shahryari}
\address{M. Shahryari: Department of Pure Mathematics,  Faculty of Mathematical
Sciences, University of Tabriz, Tabriz, Iran}
\email{mshahryari@tabrizu.ac.ir}
\newtheorem {theorem}{Theorem}
\begin{document}

\maketitle
\begin{abstract}
In the space of marked group, we determine the structure of groups which are limit points of the set of all generalized quaternion groups.
\end{abstract}

{\bf AMS Subject Classification} Primary 20F65, Secondary 20F67\\
{\bf Keywords} the space of marked groups; Gromov-Grigorchuk metric; generalized quaternion groups;  universal theory; ultra-product.

\vspace{1cm}

In the space of marked groups, consider the situation, a sequence   of generalized quaternion groups in which  converges to a  marked group $(G, S)$. We will prove that there exists a finitely generated abelian group $A=\mathbb{Z}^l\oplus \mathbb{Z}_{2^k}$, such that
$$
G\cong \frac{\mathbb{Z}_4\ltimes A}{\langle (2, 2^{k-1})\rangle}.
$$
Here, the cyclic group $\mathbb{Z}_4$ acts on $A$ by $x.a=(-1)^xa$. This gives a partial answer to a question of Champetier and Guirardel on the limits of finite groups, \cite{Champ}. Already, Guyot in \cite{Guyot} studied the same problem for the class of dihedral groups. As any dihedral group is a semidirect product of two cyclic groups, determining their limit points is more straightforward than the case of generalized quaternion groups. The generalized quaternion group $Q_{2^n}$ has the standard presentation
$$
\langle x, y| x^{2^{n-1}}=y^4=1, yxy^{-1}=x^{-1}, x^{2^{n-2}}=y^2\rangle,
$$
and in the same time it can be defined as the quotient
$$
Q_{2^n}=\frac{\mathbb{Z}_4\ltimes \mathbb{Z}_{2^{n-1}}}{\langle(2, 2^{n-2})\rangle},
$$
where the action of $\mathbb{Z}_4$ on $\mathbb{Z}_{2^{n-1}}$ is given by $x\cdot a=(-1)^xa$. In this article, the word {\em quaternion} will be used instead of {\em generalized quaternion}. Four facts about quaternion groups will be used in our arguments:\\

1- they are $2$-groups;

2- they have  unique involution;

3- any subgroup of a quaternion group is cyclic or quaternion;

4- the order two subgroup  $\langle(2, 2^{n-2})\rangle$ is central in $\mathbb{Z}_4\ltimes \mathbb{Z}_{2^{n-1}}$.

\section{Basic notions}
The idea of Gromov-Grigorchuk metric on the space of finitely generated groups is proposed by M. Gromov in his celebrated solution to the Milnor's conjecture on the groups with polynomial growth (see \cite{Gromov}). It is extensively studied by Grigorchuk in \cite{Grig}. For a detailed discussion of this metric, the reader can consult \cite{Champ}. Here, we give some necessary basic definitions. A marked group $(G,S)$ consists of a group $G$ and an $m$-tuple of its elements $S=(s_1, \ldots, s_m)$ such that $G$ is generated by $S$. Two marked groups $(G, S)$ and $(G^{\prime}, S^{\prime})$ are {\em the same}, if there exists an isomorphism $G\to G^{\prime}$ sending any $s_i$ to $s^{\prime}_i$. The set of all such marked groups is denoted by $\mathcal{G}_m$. This set can be identified by the set of all normal subgroup of the free group $\mathbb{F}_m$. Since the later is a closed subset of the compact topological space $2^{\mathbb{F}_m}$ (with the product topology), so it is also a compact space. It is easy to see that, if $(N_i)$ is a convergent sequence in this space, then
$$
\lim N_i=\liminf_i N_i=\limsup_i N_i,
$$
where by definition
$$
\liminf_i N_i=\bigcup_{j=1}^{\infty}\bigcap_{i\geq j}N_i, \ \ \limsup_i N_i=\bigcap_{j=1}^{\infty}\bigcup_{i\geq j}N_i.
$$

This space is in fact metrizable: let $B_{\lambda}$ be the closed ball of radius $\lambda$  in $\mathbb{F}_m$ (having the identity as the center) with respect to its word metric. For any two normal subgroups $N$ and $N^{\prime}$, we say that they are in distance at most $e^{-{\lambda}}$, if $B_{\lambda}\cap N=B_{\lambda}\cap N^{\prime}$. So, if $\Lambda$ is the largest of such numbers, then  we can define
$$
d(N, N^{\prime})=e^{-\Lambda}.
$$
This induces a corresponding metric on $\mathcal{G}_m$. To see what is this metric exactly, let $(G, S)$ be a marked group. For any non-negative integer $\lambda$, consider the set of {\em relations} of $G$ with length at most $\lambda$, i.e.
$$
\mathrm{Rel}_{\lambda}(G, S)=\{ w\in \mathbb{F}_m: \| w\|\leq \lambda, w(S)=1\}.
$$
Then $d((G, S),(G^{\prime}, S^{\prime}))=e^{-\Lambda}$, where $\Lambda$ is the largest number such that $\mathrm{Rel}_{\Lambda}(G,S)=\mathrm{Rel}_{\Lambda}(G^{\prime}, S^{\prime})$.  This metric on $\mathcal{G}_m$ is the so called Gromov-Grigorchuk metric. Equivalently, two marked groups $(G, S)$ and $(G^{\prime}, S^{\prime})$ are close, if large enough balls (around identity) in the corresponding marked Cayley graphs of $(G, S)$ and $(G^{\prime}, S^{\prime})$ are isomorphic.

Many topological properties of the space $\mathcal{G}_m$ are discussed in \cite{Champ}. In this article, we will need some basic results from \cite{Champ}. The first result, describes the limits of convergent marked quotient groups.

\begin{theorem}
Suppose $\lim (G_i, S_i)=(G, S)$ and for any $i$, assume that $K_i$ is a normal subgroup of $G_i$. Assume that $\overline{S}_i$ is the canonical image of $S_i$ in $G_i/K_i$. If we have $\lim (G_i/K_i, \overline{S}_i)=(H, T)$, then $H=G/K$ for some normal subgroup $K$ and $T$ is the canonical image of $S$ in $G/K$.
\end{theorem}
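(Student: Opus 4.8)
The plan is to pass from marked groups to normal subgroups of $\mathbb{F}_m$, where convergence has the concrete $\liminf$/$\limsup$ description recalled above, and then to read off the conclusion from the correspondence and isomorphism theorems. First I would fix the identifications: writing $\pi_i\colon \mathbb{F}_m\to G_i$ for the epimorphism carrying the standard generators to $S_i$, set $N_i=\ker\pi_i$, so that $(G_i,S_i)$ corresponds to $N_i$, and $\lim (G_i,S_i)=(G,S)$ corresponds to $N_i\to N$ with $G\cong \mathbb{F}_m/N$. By the correspondence theorem each $K_i\trianglelefteq G_i$ has the form $K_i=M_i/N_i$ for a unique normal subgroup $M_i$ of $\mathbb{F}_m$ with $N_i\subseteq M_i$; then $G_i/K_i\cong \mathbb{F}_m/M_i$ and the marking $\overline{S}_i$ is exactly the image of the standard generators, so $(G_i/K_i,\overline{S}_i)$ corresponds to $M_i$. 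Hence the hypothesis $\lim (G_i/K_i,\overline{S}_i)=(H,T)$ says $M_i\to M$ for some normal subgroup $M$ with $H\cong \mathbb{F}_m/M$ and $T$ the image of the standard generators.

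The key step is to show $N\subseteq M$. For this I would use the $\liminf$ description of a convergent sequence: if $w\in N=\liminf_i N_i$, then $w\in \bigcap_{i\geq j}N_i$ for some $j$, i.e. $w\in N_i$ for all $i\geq j$. Since $N_i\subseteq M_i$ for every $i$, this gives $w\in M_i$ for all $i\geq j$, hence $w\in \bigcap_{i\geq j}M_i\subseteq \liminf_i M_i=M$. Thus $N\subseteq M$, and I may define $K=M/N$, a normal subgroup of $G=\mathbb{F}_m/N$.

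It then remains to identify $H$ with $G/K$ compatibly with the markings. By the third isomorphism theorem,
$$
G/K=\frac{\mathbb{F}_m/N}{M/N}\cong \mathbb{F}_m/M\cong H,
$$
and I would check that the canonical isomorphism sends the image of $s_i$ in $G/K$ to the $i$-th entry of $T$: both are the image of the $i$-th standard generator of $\mathbb{F}_m$ under the composite $\mathbb{F}_m\to \mathbb{F}_m/N\to (\mathbb{F}_m/N)/(M/N)$, identified with $\mathbb{F}_m/M$. This shows $T$ is the canonical image of $S$ in $G/K$, as required. I expect the only genuine content to be the inclusion $N\subseteq M$, everything else being bookkeeping through the dictionary between marked groups and normal subgroups; the main obstacle, such as it is, lies in keeping the two convergences organized simultaneously, and the $\liminf$ characterization makes the inclusion transparent, since membership in a limit is witnessed by eventual membership in the terms, which is preserved by the inclusions $N_i\subseteq M_i$.
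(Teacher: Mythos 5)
Your proof is correct and complete: the translation into normal subgroups of $\mathbb{F}_m$, the key inclusion $N\subseteq M$ obtained from the $\liminf$ description of convergence together with $N_i\subseteq M_i$, and the third isomorphism theorem with the check on markings give exactly the statement. The paper itself imports this theorem from \cite{Champ} without proof, but the identification you use---$K$ corresponding to $M=\liminf_i M_i$, where $M_i$ is the preimage of $K_i$ in $\mathbb{F}_m$---is precisely the description of $K$ that the paper records later in its main argument (``this description of $K$ is generally true for all cases of Theorem 1''), so your route agrees with the paper's treatment.
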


In our main argument, we will give an explicit description of this normal subgroup $K$ in terms of the normal subgroups $K_i$. The second result, concerns the notion of fully residualness. Let $\mathfrak{X}$ be a class of groups. We say that a group $G$ is fully residually $\mathfrak{X}$, if for any finite subset $E\subseteq G$, there exists a group $H\in \mathfrak{X}$ and a homomorphism $\alpha:G\to H$ such that the restriction of $\alpha$ to $E$ is injective.

\begin{theorem}
Any finitely generated residually $\mathfrak{X}$-group is a limit of a sequence of marked groups from $\mathfrak{X}$. Conversely, any finitely presented limit of such marked groups is fully residually $\mathfrak{X}$.
\end{theorem}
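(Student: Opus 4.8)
The plan is to reduce both directions to statements about relations of bounded length, and to isolate the one nontrivial mechanism in each: producing an honest homomorphism (injective on a prescribed finite set) out of the mere coincidence of short relations. Throughout I write $G=\mathbb{F}_m/N$ and $G_i=\mathbb{F}_m/N_i$, so that a word $w$ with $\|w\|\le\lambda$ satisfies $w\in\mathrm{Rel}_\lambda(G,S)$ exactly when $w\in N$, and convergence $(G_i,S_i)\to(G,S)$ means that for every $\lambda$ one has $N\cap B_\lambda=N_i\cap B_\lambda$ for all sufficiently large $i$.

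For the first assertion I would argue with balls, reading the residual hypothesis in its finite-set (``fully residually'') form, which is in any case automatic once $\mathfrak{X}$ is closed under finite direct products and subgroups. Fix $\lambda$. Since $G$ is finitely generated, the image in $G$ of the ball $B_\lambda$ is a finite subset $\overline{B}_\lambda\subseteq G$. The hypothesis supplies $H_\lambda\in\mathfrak{X}$ and a homomorphism $\alpha_\lambda\colon G\to H_\lambda$ injective on $\overline{B}_\lambda$; after replacing $H_\lambda$ by the subgroup $\alpha_\lambda(G)$ (legitimate when $\mathfrak{X}$ is subgroup-closed) the tuple $T_\lambda=\alpha_\lambda(S)$ generates, so $(H_\lambda,T_\lambda)$ is a marked $\mathfrak{X}$-group. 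Being a marked quotient map, $\alpha_\lambda$ gives $\mathrm{Rel}_\lambda(G,S)\subseteq\mathrm{Rel}_\lambda(H_\lambda,T_\lambda)$, while injectivity on $\overline{B}_\lambda$ gives the reverse inclusion; hence the two relation sets agree up to length $\lambda$ and $d((H_\lambda,T_\lambda),(G,S))\le e^{-\lambda}$. Letting $\lambda\to\infty$ realizes $(G,S)$ as a limit of marked groups from $\mathfrak{X}$.

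For the converse I would exploit finite presentation to manufacture homomorphisms, organizing the bookkeeping with the ultraproduct machinery signalled in the keywords. Write $G=\langle S\mid R\rangle$ with $R$ finite, put $\lambda_0=\max_{r\in R}\|r\|$, fix a non-principal ultrafilter $\mathcal{U}$ on the index set, and form $G_{\mathcal{U}}=\prod_i G_i/\mathcal{U}$. For a fixed word $w$, convergence at radius $\|w\|$ shows $w(S)=1$ in $G$ if and only if $w(S_i)=1$ in $G_i$ for all large $i$, hence for $\mathcal{U}$-almost all $i$; therefore $w\mapsto w([S_i]_{\mathcal{U}})$ identifies $G$ with the marked subgroup $\langle[S_i]_{\mathcal{U}}\rangle$ of $G_{\mathcal{U}}$, so $G$ embeds into an ultraproduct of $\mathfrak{X}$-groups. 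Now fix a finite $E\subseteq G$. Since $R$ is finite, each relator holds in $G_i$ for $\mathcal{U}$-almost all $i$, so for $\mathcal{U}$-almost all $i$ the assignment $s_j\mapsto(S_i)_j$ respects $R$ and defines a homomorphism $\phi_i\colon G\to G_i$, whose value on a word is the $i$-th coordinate of the corresponding element of $G_{\mathcal{U}}$. For distinct $g,h\in E$ the element $gh^{-1}$ is nontrivial in $G$, hence in $G_{\mathcal{U}}$, so $\phi_i(gh^{-1})\ne1$ for $\mathcal{U}$-almost all $i$. As $E$ is finite, intersecting these finitely many $\mathcal{U}$-large sets yields an index $i$ for which $\phi_i\colon G\to G_i\in\mathfrak{X}$ is defined and injective on $E$; thus $G$ is fully residually $\mathfrak{X}$.

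The crux—and the sole place finite presentation is used—is the construction of the maps $\phi_i\colon G\to G_i$. Coincidence of short relations never by itself produces a map out of $G$; what is needed is that \emph{all} defining relators of $G$ eventually hold in $G_i$, and this is guaranteed precisely because there are finitely many relators, all of length $\le\lambda_0$, so that convergence at the single radius $\lambda_0$ forces $R\subseteq N_i$ for large $i$. Without finite presentation the $G_i$ match $G$ only relation-by-relation and no single $G_i$ need be a quotient of $G$, so the conclusion genuinely fails; this is exactly why the statement restricts to finitely presented limits. The remaining issue is mere uniformity—separating an entire finite set, equivalently being injective on a whole ball, through one index $i$—which the finiteness of $E$ together with the finite-intersection property of $\mathcal{U}$ (or, ultrafilter-free, the finitely many thresholds handed down by convergence) dispatches.
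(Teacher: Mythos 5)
Your proof is correct, but note first that the paper itself never proves this statement: it is Theorem 2, quoted from Champetier--Guirardel \cite{Champ} as background, so there is no internal proof to compare against. What you supply is essentially the standard argument. Your first half (the ball argument: take $\alpha_\lambda$ injective on the image of $B_\lambda$ in $G$, pass to the image $\alpha_\lambda(G)$, and check $\mathrm{Rel}_\lambda(G,S)=\mathrm{Rel}_\lambda(H_\lambda,T_\lambda)$) is exactly the argument the paper does write out later, in the implication $(2\Rightarrow 1)$ of its Theorem 6 for the quaternion class. Your second half --- von Dyck's theorem applied to a finite presentation to obtain honest homomorphisms $\phi_i\colon G\to G_i$ for all large $i$, then separation of a finite set $E$ by intersecting finitely many cofinal conditions --- appears nowhere in the paper and is the real content of the converse; your detour through the ultraproduct is harmless (it re-proves half of the paper's Theorem 4 along the way), and, as you note, the ultrafilter can be stripped out entirely.

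Two remarks on hypotheses, both to your credit. First, the paper's statement says ``residually'' in the forward direction, but the argument (yours and the one in \cite{Champ}) needs ``fully residually''; your reading is the intended one. Second, your caveat that $\mathfrak{X}$ be subgroup-closed is not pedantry: the literal statement for an arbitrary class $\mathfrak{X}$ is false. For instance, $\mathbb{Z}$ is fully residually quaternion (send $1$ to a generator of the maximal cyclic subgroup of $Q_{2^n}$ for large $n$), yet no sequence of marked quaternion groups converges to a marking of $\mathbb{Z}$: every generating tuple of $Q_{2^n}$ contains an element outside the index-two cyclic subgroup, hence an element of order $4$, so after passing to a subsequence some fixed coordinate satisfies the length-four relation $s_i^4$; in the limit this forces the corresponding marking element of $\mathbb{Z}$ to be $0$, whence the relation $s_i=1$ would have to hold eventually in $Q_{2^n}$, contradicting that the element lies outside the cyclic subgroup. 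In the paper's application this issue is silently repaired by fact 3 (subgroups of quaternion groups are cyclic or quaternion) together with the standing assumption that $G$ is non-abelian, so that the images $\alpha(G)$ are again quaternion once the ball is large enough; your subgroup-closure hypothesis is the general form of the same repair, and flagging it is the right call.
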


To explain the next result from \cite{Champ}, we need some logical concepts. Let $\mathcal{L}=(1, ^{-1}, \cdot)$ be the first order language of groups. For a group $G$, we denote by $\mathrm{Th}(G)$, the first order theory of $G$, i.e. the set of all first order sentences in the language $\mathcal{L}$ which are true in $G$. The universal theory of $G$ is denoted by $\mathrm{Th}_{\forall}(G)$ and consists of all elements of $\mathrm{Th}(G)$ which have just universal quantifiers in their normal form.

\begin{theorem}
Suppose a sequence $(G_i, S_i)$ of marked groups converges to $(G, S)$. Then we have $\limsup_i \mathrm{Th}_{\forall}(G_i)\subseteq \mathrm{Th}_{\forall}(G)$. Conversely, if $\bigcap_i \mathrm{Th}_{\forall}(G_i)\subseteq \mathrm{Th}_{\forall}(G)$, then for any marking $(G, S)$, there exits a sequence of integers $(n_i)$ and subgroups $H_i\leq G_{n_i}$ such that a sequence of suitable markings of $H_i$s converges to $(G, S)$.
\end{theorem}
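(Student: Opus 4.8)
The plan is to treat the two implications separately, both through the dictionary between the Gromov--Grigorchuk metric and quantifier-free information about tuples. For the direct inclusion, first unwind the definition of $\limsup$: a universal sentence $\phi$ lies in $\limsup_i \mathrm{Th}_{\forall}(G_i)$ exactly when it is true in infinitely many of the $G_i$. I would argue by contraposition. Suppose $\phi = \forall \bar{x}\,\psi(\bar{x})$, with $\psi$ quantifier-free, fails in $G$. Since $G = \langle S\rangle$, a counterexample is a tuple $\bar{g} = (w_1(S), \ldots, w_n(S))$ of words in the generators, and $\psi$ is a Boolean combination of equations between group words; substituting $\bar g$ turns the failure $G \models \neg\psi(\bar g)$ into finitely many fixed conditions ``$v(S) = 1$'' and ``$v'(S) \neq 1$'' for certain words $v, v' \in \mathbb{F}_m$. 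Let $\lambda$ bound the lengths of all these words. Convergence $(G_i, S_i) \to (G, S)$ gives $\mathrm{Rel}_{\lambda}(G_i, S_i) = \mathrm{Rel}_{\lambda}(G, S)$ for all large $i$, so each of these conditions holds verbatim in $G_i$; hence $G_i \models \neg\psi(w_1(S_i), \ldots, w_n(S_i))$ and $\phi$ fails in all large $i$. Thus $\phi$ is true in only finitely many $G_i$, which is the contrapositive.

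For the converse, set $\mathfrak{X} = \{G_i\}$, so that $\bigcap_i \mathrm{Th}_{\forall}(G_i) = \mathrm{Th}_{\forall}(\mathfrak{X})$ and the hypothesis reads $G \models \mathrm{Th}_{\forall}(\mathfrak{X})$. The decisive input is the classical model-theoretic fact that a structure satisfies the common universal theory of a class if and only if it embeds into an ultraproduct of members of that class (equivalently, the universal closure of $\mathfrak{X}$ is $\mathbf{ISPu}(\mathfrak{X})$). Applying this, I obtain an embedding $\iota : G \hookrightarrow \prod_i G_i / \mathcal{U}$ for some ultrafilter $\mathcal{U}$ on the index set, which may be taken non-principal (the degenerate case, where $G$ already embeds into a single $G_{i_0}$, is handled by the constant sequence $n_i = i_0$).

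Now represent each $\iota(s_j)$ by a sequence $(g_{i,j})_i$, put $T_i = (g_{i,1}, \ldots, g_{i,m})$, and let $H_i = \langle T_i\rangle \leq G_i$. By \L{}o\'{s}'s theorem, for every word $w \in \mathbb{F}_m$ one has $w \in \mathrm{Rel}(G, S)$ if and only if $\{i : w(T_i) = 1\} \in \mathcal{U}$. Fixing a radius $\lambda$ and intersecting the finitely many resulting $\mathcal{U}$-sets coming from words of length at most $\lambda$ shows that
$$
A_{\lambda} = \{\, i : \mathrm{Rel}_{\lambda}(H_i, T_i) = \mathrm{Rel}_{\lambda}(G, S)\,\} \in \mathcal{U},
$$
with $A_{\lambda+1} \subseteq A_{\lambda}$ and each $A_{\lambda}$ infinite by non-principality. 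A diagonal choice of strictly increasing $n_{\lambda} \in A_{\lambda}$ then yields $\mathrm{Rel}_{\mu}(H_{n_{\lambda}}, T_{n_{\lambda}}) = \mathrm{Rel}_{\mu}(G, S)$ for all $\lambda \geq \mu$, i.e.\ $(H_{n_{\lambda}}, T_{n_{\lambda}}) \to (G, S)$ in $\mathcal{G}_m$, as required.

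The main obstacle is the ultraproduct characterization used in the converse; I would establish it by the usual diagram-plus-compactness argument, noting that if some finite part of the atomic diagram of $G$ were unsatisfiable throughout $\mathfrak{X}$, then its universal closure would be a sentence of $\mathrm{Th}_{\forall}(\mathfrak{X})$ that fails in $G$. Once \L{}o\'{s}'s theorem and this characterization are in hand, both the direct inclusion and the extraction of the convergent subsequence reduce to essentially routine bookkeeping.
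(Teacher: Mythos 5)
Your first half (the inclusion $\limsup_i \mathrm{Th}_{\forall}(G_i)\subseteq \mathrm{Th}_{\forall}(G)$ by contraposition) is correct: a failure of a universal sentence in $G$ is witnessed by finitely many word equalities and inequalities in the generators, and agreement of $\mathrm{Rel}_{\lambda}$ for all large $i$ transfers this counterexample to almost every $G_i$. Note the paper itself gives no proof of this statement (it is quoted from Champetier--Guirardel), so your argument stands on its own here, and this part is fine.

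The converse, however, contains a genuine gap. From $\bigcap_i \mathrm{Th}_{\forall}(G_i)\subseteq \mathrm{Th}_{\forall}(G)$ you claim an embedding $\iota: G\hookrightarrow \prod_i G_i/\mathcal{U}$ for an ultrafilter $\mathcal{U}$ on $\mathbb{N}$, i.e.\ into an ultraproduct in which each $G_i$ occurs exactly once, as the $i$-th factor. The classical fact you invoke does not give this: the diagram-plus-compactness argument produces an embedding into an ultraproduct $\prod_{j\in J} G_{f(j)}/\mathcal{V}$ over a different index set $J$ (typically the finite subsets of the atomic diagram of $G$), in which members of $\{G_i\}$ repeat. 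Your stronger claim is false in general: take $G_1=\mathbb{Z}$, $G_i=1$ for $i\geq 2$, and $G=\mathbb{Z}^2$. Then $\bigcap_i \mathrm{Th}_{\forall}(G_i)=\mathrm{Th}_{\forall}(\mathbb{Z})\cap\mathrm{Th}_{\forall}(1)=\mathrm{Th}_{\forall}(\mathbb{Z})\subseteq \mathrm{Th}_{\forall}(\mathbb{Z}^2)$ (universal sentences pass to subgroups, and $\mathbb{Z}^2$ embeds in an ultrapower of $\mathbb{Z}$ via $(a,b)\mapsto [(a+bn)_n]$), yet every ultraproduct $\prod_i G_i/\mathcal{U}$ is isomorphic either to $\mathbb{Z}$ (if $\{1\}\in\mathcal{U}$) or to the trivial group, and $\mathbb{Z}^2$ embeds in neither. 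The theorem's conclusion nevertheless holds in this example, via the \emph{constant} sequence $n_i=1$ and varying markings: $(\mathbb{Z},(1,N))\to(\mathbb{Z}^2,((1,0),(0,1)))$ as $N\to\infty$. This also shows that your insistence on a non-principal ultrafilter and on strictly increasing $n_{\lambda}$ cannot always be arranged and is not required by the statement: repeated indices $n_i$, with different subgroups or markings inside one fixed $G_{n_i}$, are allowed and sometimes necessary. The repair is routine: run your compactness argument to get $G\hookrightarrow \prod_{j\in J}G_{f(j)}/\mathcal{V}$ with repetitions allowed; your L\v{o}s-plus-$A_{\lambda}$ extraction then goes through verbatim over $J$ (each $A_{\lambda}$ lies in $\mathcal{V}$, hence is nonempty, and that is all one needs), and setting $n_{\lambda}=f(j_{\lambda})$ for any choice $j_{\lambda}\in A_{\lambda}$ yields $d\bigl((H_{j_{\lambda}},T_{j_{\lambda}}),(G,S)\bigr)\leq e^{-\lambda}$, finishing the proof.
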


There is also a logical connection between convergence in the space of marked groups and ultra-products. Let $(G_i)$ be a sequence of groups and $\mathcal{U}$ be an ultra-filter on $\mathbb{N}$. Define a congruence over $\prod_iG_i$ by
$$
(x_i)\sim (y_i) \Leftrightarrow \{i:\ x_i=y_i\}\in \mathcal{U}.
$$
The quotient group $\prod_iG_i/\sim$ is called the ultra-product of the groups $G_i$ with respect to $\mathcal{U}$. We denote this new group by $\prod_iG_i/\mathcal{U}$.  A special case of the well-known theorem of L\v{o}s says that
$$
\mathrm{Th}_{\forall}(\prod_iG_i/\mathcal{U})=\lim_{\mathcal{U}}\mathrm{Th}_{\forall}(G_i),
$$
where, $\lim_{\mathcal{U}}$ of any sequence of sets $(A_i)$ is the set of all elements which belong to $\mathcal{U}$-almost all number of $A_i$s.

\begin{theorem}
Let $\lim (G_i, S_i)=(G, S)$. Then $G$ can be embedded in an ultra-product $\prod_iG_i/\mathcal{U}$, for some ultra-filter $\mathcal{U}$. Conversely, let $G$ be any finitely generated subgroup of some ultra-product $\prod_iG_i/\mathcal{U}$. Then for any marking $(G, S)$, there exits a sequence of integers $(n_i)$ and subgroups $H_i\leq G_{n_i}$ such that a sequence of suitable markings of $H_i$s converges to $(G, S)$.
\end{theorem}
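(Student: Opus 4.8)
The plan is to prove the two directions separately, both resting on the elementary description of convergence through $\liminf$ and $\limsup$ recorded above, together with the defining congruence of the ultra-product.

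For the forward direction I would fix any non-principal ultra-filter $\mathcal{U}$ on $\mathbb{N}$. Let $N$ and $N_i$ be the normal subgroups of $\mathbb{F}_m$ corresponding to the markings $(G,S)$ and $(G_i,S_i)$, so that $\lim (G_i,S_i)=(G,S)$ says exactly $\lim N_i = N$, hence $N=\liminf_i N_i=\limsup_i N_i$. Unwinding the definitions, this means precisely that for every $w\in\mathbb{F}_m$ one has $w\in N$ iff $w\in N_i$ for all sufficiently large $i$, and $w\notin N$ iff $w\notin N_i$ for all sufficiently large $i$. Writing $S_i=(s_{1,i},\ldots,s_{m,i})$, I would then define $\phi\colon G\to\prod_iG_i/\mathcal{U}$ on generators by $\phi(s_j)=(s_{j,i})_i/\mathcal{U}$ and extend multiplicatively, so that $\phi(w(S))=(w(S_i))_i/\mathcal{U}$. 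Both well-definedness and injectivity reduce to the dichotomy above: if $w\in N$ then $\{i:w(S_i)=1\}$ is cofinite and hence in $\mathcal{U}$, while if $w\notin N$ then $\{i:w(S_i)=1\}$ is finite and hence not in $\mathcal{U}$. This already yields the embedding $G\hookrightarrow\prod_iG_i/\mathcal{U}$.

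For the converse I would first record the atomic instance of \L o\'s's theorem, which here is nothing more than the definition of the ultra-product congruence. Writing the given generators as $s_j=(s_{j,i})_i/\mathcal{U}$ and setting $H_i=\langle s_{1,i},\ldots,s_{m,i}\rangle\leq G_i$ with the induced marking $S_i$, one has for every $w\in\mathbb{F}_m$ that $w(S)=1$ in $G$ iff $\{i:w(S_i)=1 \text{ in } H_i\}\in\mathcal{U}$. Letting $M_i=\ker(\mathbb{F}_m\to H_i)$, this reads $w\in N\Leftrightarrow\{i:w\in M_i\}\in\mathcal{U}$; since $\mathcal{U}$ is an ultra-filter, negating gives $w\notin N\Leftrightarrow\{i:w\notin M_i\}\in\mathcal{U}$. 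Consequently, for each $w$ the agreement set $A_w=\{i:(w\in M_i)\Leftrightarrow(w\in N)\}$ lies in $\mathcal{U}$.

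The remaining step, which I expect to be the main (though still routine) obstacle, is to pass from this $\mathcal{U}$-almost-everywhere agreement to an honest convergent subsequence, since the statement demands a sequence $(n_i)$ rather than an ultra-limit. Here I would enumerate $\mathbb{F}_m=\{w_1,w_2,\ldots\}$ and run a diagonal argument: because $\mathcal{U}$ is non-principal every member is infinite, so the finite intersections $A_{w_1}\cap\cdots\cap A_{w_k}\in\mathcal{U}$ are infinite, permitting a choice $n_1<n_2<\cdots$ with $n_k\in A_{w_1}\cap\cdots\cap A_{w_k}$. Then for each fixed $w_k$ and all $K\geq k$ one has $w_k\in M_{n_K}\Leftrightarrow w_k\in N$, so that $\lim_K M_{n_K}=N$, i.e. the markings $(H_{n_K},S_{n_K})$ converge to $(G,S)$. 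The only genuine care needed is the non-principality of $\mathcal{U}$; should $\mathcal{U}$ be principal the ultra-product collapses onto a single factor and a constant sequence suffices.
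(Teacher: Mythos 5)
Your proof is correct. Note that the paper does not prove this statement at all --- it is Theorem 4 of its background section, quoted without proof from Champetier--Guirardel \cite{Champ} --- so there is no internal proof to compare against; your argument (the embedding $w(S)\mapsto (w(S_i))_i/\mathcal{U}$ for a fixed non-principal ultrafilter, using that convergence forces each word to lie in eventually all or eventually none of the kernels $N_i$, plus the diagonal extraction from the $\mathcal{U}$-large agreement sets $A_w$ in the converse) is the standard one found in that reference, including the correct observation that the principal case degenerates to a single factor where a constant sequence suffices.
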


\section{Main result}
We work within the space of marked groups $\mathcal{G}_m$. In \cite{Guyot}, Guyot determined the structure of limits of dihedral groups. The main result of \cite{Guyot} is the following.

\begin{theorem}
Let $G$ be a non-abelian finitely generated group. Then the following conditions are equivalent:\\

1- $G$ is a limit of dihedral groups.

2- $G$ is fully residually dihedral.

3- $G$ is isomorphic to a semidirect product $\mathbb{Z}_2\ltimes A$, where $A$ is a finitely generated abelian group with a cyclic torsion part, such that $\mathbb{Z}_2$ acts by multiplication by $-1$.

4- $\bigcap_{n\geq 3}\mathrm{Th}_{\forall}(D_{2n})\subseteq \mathrm{Th}_{\forall}(G)$.

5- $G$ can be embedded in some ultra-product of dihedral groups.
\end{theorem}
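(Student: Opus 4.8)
The plan is to prove the cycle $(3)\Rightarrow(2)\Rightarrow(1)\Rightarrow(3)$, which gives the equivalence of the first three conditions, and then to link $(4)$ and $(5)$ to this cycle by means of the quoted results. For $(1)\Rightarrow(4)$, Theorem 3 gives $\limsup_i\mathrm{Th}_\forall(D_{2n_i})\subseteq\mathrm{Th}_\forall(G)$, and every sentence lying in $\bigcap_{n}\mathrm{Th}_\forall(D_{2n})$ certainly lies in this $\limsup$; hence $(4)$ holds. The equivalence $(4)\Leftrightarrow(5)$ is the standard model-theoretic fact, obtained from the quoted identity $\mathrm{Th}_\forall(\prod_iG_i/\mathcal{U})=\lim_{\mathcal{U}}\mathrm{Th}_\forall(G_i)$ together with inheritance of universal sentences by substructures, that a group satisfies the common universal theory of a class if and only if it embeds into some ultra-product of that class. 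Finally $(5)\Rightarrow(1)$ follows from the converse part of Theorem 4: $G$ is then a limit of suitable markings of subgroups $H_i\le D_{2m_i}$, each of which is cyclic or dihedral; since $G$ is non-abelian and the commutator relation $[s,t]=1$ has bounded length, it would pass to the limit were almost all $H_i$ abelian, so almost all $H_i$ are dihedral and a subsequence exhibits $G$ as a limit of dihedral groups.

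For $(3)\Rightarrow(2)$, write $A=\mathbb{Z}^l\oplus\mathbb{Z}_d$ and fix a finite subset $E\subseteq G=\mathbb{Z}_2\ltimes A$. It suffices to produce a single finite cyclic quotient $A\to\mathbb{Z}_N$, with $d\mid N$, that separates the finitely many differences of $A$-parts of same-type elements of $E$; the induced map $G\to\mathbb{Z}_2\ltimes\mathbb{Z}_N=D_{2N}$ is then injective on $E$, because reflections go to reflections and rotations to rotations. Such a quotient exists precisely because the torsion of $A$ is \emph{cyclic}: the torsion embeds into $\mathbb{Z}_N$ once $d\mid N$, and a generic functional on $\mathbb{Z}^l$ composed with reduction mod $N$ separates any prescribed finite set for $N$ large. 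This is exactly where the hypothesis ``cyclic torsion part'' is used. The implication $(2)\Rightarrow(1)$ is then immediate from the first assertion of Theorem 2, since a fully residually dihedral group is residually dihedral and finitely generated.

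The heart of the proof is $(1)\Rightarrow(3)$. Suppose $(D_{2n_i},S_i)\to(G,S)$ with $S_i=(s^{(i)}_1,\dots,s^{(i)}_m)$ and $G$ non-abelian. Each generator is either a rotation or a reflection; as there are only finitely many patterns, after passing to a subsequence the type of each $s^{(i)}_j$ is independent of $i$, and at least one, say $t_i:=s^{(i)}_1$, is a reflection (otherwise $D_{2n_i}$ would be abelian). Replacing each reflection generator $s^{(i)}_j$ with $j\ge2$ by the rotation $t_is^{(i)}_j$ — a fixed word substitution, hence continuous on marked groups — I obtain a convergent sequence of remarkings in which $t_i$ is an involution and the remaining generators $c^{(i)}_2,\dots,c^{(i)}_m$ lie in the cyclic rotation subgroup $C_i$ and generate it. Now the relations $t_i^2=1$, $[c^{(i)}_j,c^{(i)}_k]=1$ and $t_ic^{(i)}_jt_i^{-1}=(c^{(i)}_j)^{-1}$ have bounded length and hold in every $D_{2n_i}$, so they pass to the limit. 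Hence $t^2=1$, the images $c_j$ generate an abelian subgroup $A$, and $t$ acts on $A$ by inversion. Since $G=\langle t,A\rangle$ is non-abelian, $t\notin A$, so $[G:A]=2$ and $G\cong\mathbb{Z}_2\ltimes A$ with the inversion action.

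It remains to show that the torsion of the finitely generated abelian group $A$ is cyclic, and this is the step where bounded-length relations no longer suffice; here I would use that the sequence \emph{converges}, so that $\lim_iN_i=\liminf_iN_i=\limsup_iN_i$, which means a fixed word is trivial in $G$ if and only if it is trivial in $D_{2n_i}$ for all large $i$. Fix a prime $p$ and suppose $a,b\in A$ satisfy $a^p=b^p=1$ with $a\neq1$; writing $a=\lim a_i$, $b=\lim b_i$ with $a_i,b_i\in C_i$, the above equivalence yields $a_i^p=b_i^p=1$ and $a_i\neq1$ for all large $i$. As $C_i$ is cyclic, its subgroup of exponent $p$ is cyclic of order at most $p$, so $b_i=a_i^{k_i}$ with $0\le k_i<p$; passing to a further subsequence on which $k_i$ is a constant $k$, the fixed-length relation $b_ia_i^{-k}=1$ holds for all large $i$ and therefore passes to the limit, giving $b\in\langle a\rangle$. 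Thus each $A[p]$ has order at most $p$, which for a finitely generated abelian group forces the torsion subgroup to be cyclic, completing $(1)\Rightarrow(3)$. The main obstacle is precisely this last point: everything else reduces to transferring relations of bounded length, whereas cyclicity of the torsion is a ``global'' feature that must be recovered from the convergence of the normal subgroups together with a pigeonhole argument on the exponents $k_i$.
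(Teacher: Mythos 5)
You are attempting a statement that the paper itself never proves: its Theorem 5 is quoted from Guyot \cite{Guyot}, and even the authors' proof of the quaternion analogue (Theorem 6) defers the key structural step back to \cite{Guyot} (``this is completely similar to the process of finding limits of dihedral groups''). So the only available comparison is with the paper's proof of Theorem 6, whose pattern is $1\Rightarrow 5\Rightarrow 4\Rightarrow 1$, $1\Leftrightarrow 3$, $1\Leftrightarrow 2$. Your decomposition is different and substantially more self-contained. Your $(1)\Rightarrow(3)$ supplies exactly what the paper outsources: the remarking trick (replacing each reflection generator $s_j$ by the rotation $s_1s_j$, a fixed word substitution that preserves both generation and convergence), transfer of the bounded-length relations $t^2=1$, $[c_j,c_k]=1$, $tc_jt^{-1}=c_j^{-1}$ to the limit, and the pigeonhole argument on the exponents $k_i$ showing $|A[p]|\le p$ for each prime $p$; you correctly isolate cyclicity of the torsion as the one point where bounded-length relations do not suffice and resolve it using $\lim N_i=\liminf N_i=\limsup N_i$. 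Your $(3)\Rightarrow(2)$, via an explicit separating quotient $A\to\mathbb{Z}_N$ with $d\mid N$, is also sound and replaces the paper's route $3\Rightarrow 1$ (limits of cyclic groups plus Theorem 1, plus counting involutions); the verification you omit is routine: if $f(v)+(N/d)t\equiv 0 \pmod N$ then $N\mid df(v)$, impossible once $N>d\max|f(v)|$ and $f$ is chosen with $f(v)\neq 0$ on the finitely many relevant $v\neq 0$. One caution on $(2)\Rightarrow(1)$: Theorem 2 as printed says ``residually'' where Champetier--Guirardel require ``fully residually'' (residual alone is too weak for classes not closed under direct products); your argument is unaffected since you have full residualness in hand, but the safer route is the paper's own finite-ball argument.

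There is one step whose justification does not hold up as written, and your implication graph genuinely needs it: $(4)\Rightarrow(5)$, since you close the loop via $4\Rightarrow 5\Rightarrow 1$. The reasoning you offer --- the L\v{o}s identity together with inheritance of universal sentences by subgroups --- proves only $(5)\Rightarrow(4)$, which is the direction you do not need. The direction $(4)\Rightarrow(5)$ is a compactness/diagram argument: enumerate the quantifier-free formulas satisfied by a generating tuple of $G$, realize each finite conjunction in some $D_{2n}$ (possible because the negation of such a conjunction is a universal sentence false in $G$, hence not in $\bigcap_n\mathrm{Th}_{\forall}(D_{2n})$), and embed $G$ into the resulting ultra-product over a non-principal ultrafilter. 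This is a standard fact, but it is not what your parenthetical justification describes. Alternatively, you can bypass $(5)$ entirely and prove $(4)\Rightarrow(1)$ directly from the converse part of the paper's Theorem 3, using exactly the same cyclic-or-dihedral subgroup argument you already employ for $(5)\Rightarrow(1)$; this is precisely what the paper does in the quaternion case. With that single repair, your proof is complete and correct.
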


Our aim is to give the same characterization for the case of quaternion groups. Recall that by a quaternion group, we mean in fact a generalized quaternion group.

\begin{theorem}
Let $G$ be a non-abelain finitely generated group. Then the following conditions are equivalent:\\

1- $G$ is a limit of quaternion  groups.

2- $G$ is fully residually quaternion.

3- $G$ is isomorphic to a group of the form
$$
\frac{\mathbb{Z}_4\ltimes (\mathbb{Z}^l\oplus \mathbb{Z}_{2^k})}{\langle (2, 2^{k-1})\rangle},
$$
for some integers $l$ and $k$, such that the action of $\mathbb{Z}_4$ on $\mathbb{Z}^l\oplus \mathbb{Z}_{2^k}$ is given by $x\cdot a=(-1)^xa$.

4- $\bigcap_{n\geq 3}\mathrm{Th}_{\forall}(Q_{2^n})\subseteq \mathrm{Th}_{\forall}(G)$.

5- $G$ can be embedded in some ultra-product of quaternion groups.
\end{theorem}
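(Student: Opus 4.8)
The plan is to establish the cycle of implications $(3)\Rightarrow(2)\Rightarrow(1)\Rightarrow(5)\Rightarrow(3)$ and to tie in condition $(4)$ through the two extra arrows $(5)\Rightarrow(4)$ and $(4)\Rightarrow(1)$. Three of these arrows are soft and rest on the general theorems of Section 1. Indeed $(2)\Rightarrow(1)$ is immediate from the fully-residual theorem (Theorem 2), since a fully residually quaternion group is in particular residually quaternion and finitely generated; $(1)\Rightarrow(5)$ is exactly the embedding half of the ultra-product theorem (Theorem 4); and $(5)\Rightarrow(4)$ follows from L\v{o}s's theorem, because universal sentences pass from a group to its subgroups, so that $\mathrm{Th}_{\forall}(\prod_i Q_i/\mathcal{U})\subseteq\mathrm{Th}_{\forall}(G)$, while $\mathrm{Th}_{\forall}(\prod_i Q_i/\mathcal{U})=\lim_{\mathcal{U}}\mathrm{Th}_{\forall}(Q_i)\supseteq\bigcap_{n\ge 3}\mathrm{Th}_{\forall}(Q_{2^n})$. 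For $(4)\Rightarrow(1)$ I would invoke the converse half of the universal-theory theorem (Theorem 3), which produces subgroups $H_i\leq Q_{2^{n_i}}$ whose markings converge to $(G,S)$. By fact 3 each $H_i$ is cyclic or quaternion, and since commutativity is a closed condition a convergent sequence with infinitely many abelian terms would have abelian limit, contradicting that $G$ is non-abelian; hence all but finitely many $H_i$ are quaternion, and that tail already exhibits $G$ as a limit of quaternion groups.

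For $(3)\Rightarrow(2)$ I would argue directly that $G=(\mathbb{Z}_4\ltimes(\mathbb{Z}^l\oplus\mathbb{Z}_{2^k}))/\langle(2,2^{k-1})\rangle$ is fully residually quaternion. Given a finite set $E\subseteq G$, it suffices to build, for large $m$, a homomorphism $A=\mathbb{Z}^l\oplus\mathbb{Z}_{2^k}\to\mathbb{Z}_{2^m}$ that is injective on the finitely many relevant elements and sends the torsion generator to an element of order $2^k$, so that the involution $(0,2^{k-1})$ is preserved; such a map then descends to a homomorphism $G\to Q_{2^{m+1}}=(\mathbb{Z}_4\ltimes\mathbb{Z}_{2^m})/\langle(2,2^{m-1})\rangle$ that is injective on $E$. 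The only point requiring care is injectivity on $E$ of the induced map on the free part $\mathbb{Z}^l$, which is the usual residual-finiteness-of-$\mathbb{Z}^l$ argument: one spreads the images of a basis out $2$-adically in $\mathbb{Z}_{2^m}$ so that no short nonzero combination vanishes.

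The heart of the theorem is $(5)\Rightarrow(3)$, the structural identification, and this is where the real work lies. Writing each factor as $Q_i=(\mathbb{Z}_4\ltimes\langle x\rangle_i)/Z_i$, I would first analyse the ambient ultra-product $Q^{*}=\prod_i Q_i/\mathcal{U}$. The subgroup $A^{*}=\prod_i\langle x\rangle_i/\mathcal{U}$ is abelian, normal and of index $2$ in $Q^{*}$, and by L\v{o}s's theorem the first-order facts that $Q^{*}$ has a \emph{unique} involution $\iota$, that every element outside $A^{*}$ inverts $A^{*}$ under conjugation, and that every element outside $A^{*}$ squares to $\iota$, all transfer from the $Q_i$; moreover $\iota\in A^{*}$. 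Now let $G\leq Q^{*}$ be finitely generated and non-abelian and set $B=G\cap A^{*}$. Then $B$ is abelian, normal in $G$, and of index exactly $2$ (at most $2$ since $[Q^{*}:A^{*}]=2$, and exactly $2$ since $A^{*}$ is abelian while $G$ is not). Choosing $t\in G\setminus B$ gives $t^{2}=\iota\in B$, so $t$ has order $4$, and $tbt^{-1}=b^{-1}$ for all $b\in B$.

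It then remains to pin down the isomorphism type of $B$. As $B$ has finite index in the finitely generated $G$, it is finitely generated abelian, so $B\cong\mathbb{Z}^{l}\oplus T$ with $T$ finite; and the torsion of $A^{*}$ is a nested union of cyclic subgroups of order $2^{j}$ (for each $j$ the sentence ``there are exactly $2^{j}$ solutions of $g^{2^{j}}=1$'' transfers), hence cyclic or Pr\"ufer, so the finite group $T$ is cyclic, $T\cong\mathbb{Z}_{2^{k}}$, with involution $\iota=(0,2^{k-1})$. Finally the data $B\cong\mathbb{Z}^{l}\oplus\mathbb{Z}_{2^{k}}$, $t^{2}=\iota$, $tbt^{-1}=b^{-1}$ and $G=\langle B,t\rangle=B\cup Bt$ yield a surjection $\mathbb{Z}_4\ltimes B\to G$ whose kernel is exactly $\langle(2,2^{k-1})\rangle$, which is the presentation in $(3)$. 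I expect the main obstacle to be precisely this ultra-product bookkeeping: checking that the relevant quaternion features are genuinely first-order so that L\v{o}s applies, that the torsion of the non-standard cyclic group $A^{*}$ remains cyclic, and above all that $t$ has order $4$ rather than $2$ --- this last point being exactly what separates the quaternion case from Guyot's dihedral case. A final remark is that $(3)$ also shows $G$ to be polycyclic, hence finitely presented, so $(1)\Rightarrow(2)$ may alternatively be read off from the second half of Theorem 2.
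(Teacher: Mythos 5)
Your proposal is correct, but it reaches the structure theorem by a genuinely different road than the paper. The paper's implication graph is $1\Rightarrow 5\Rightarrow 4\Rightarrow 1$, $1\Leftrightarrow 3$, $1\Leftrightarrow 2$, and all of its structural work sits in $(1)\Rightarrow(3)$: the markings of $Q_{2^i}=(\mathbb{Z}_4\ltimes\mathbb{Z}_{2^{i-1}})/K_i$ are lifted to markings of the covers $\mathbb{Z}_4\ltimes\mathbb{Z}_{2^{i-1}}$ in $\mathcal{G}_{m+1}$ by appending the central element $(2,2^{i-2})$; compactness of $\mathcal{G}_{m+1}$ extracts a convergent subsequence with limit $(H,S)$; Theorem 1 gives $G=H/K$; an explicit $\liminf$ description of $K$ together with marked-ball isomorphisms shows $K\subseteq Z(H)$; Guyot's method plus the sentences $\forall x(x^p=1\to x=1)$ identifies $H=\mathbb{Z}_4\ltimes(\mathbb{Z}^l\oplus\mathbb{Z}_{2^k})$; and finally $Z(H)$ is computed to have order four and four of the five candidate central kernels are eliminated by counting involutions. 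You instead concentrate the structure theory in $(5)\Rightarrow(3)$, working directly inside the ultra-product: the index-two abelian subgroup $A^{*}$, the unique involution $\iota$, the facts that any $t\notin A^{*}$ inverts $A^{*}$ and satisfies $t^{2}=\iota$, and the local cyclicity of the torsion of $A^{*}$ produce the presentation of $G$ in a single kernel computation, with no compactness argument, no lifting of markings, no center enumeration, and no appeal to Guyot's theorem; correspondingly you close the cycle with an explicit, self-contained $(3)\Rightarrow(2)$, where the paper instead proves $(3)\Rightarrow(1)$ by limits of cyclic groups and obtains $(1)\Rightarrow(2)$ through finite presentability. The trade-off: the paper's route yields as a by-product the general $\liminf$ description of kernels under limits of quotients (which the authors flag as of independent interest), while your route is shorter and keeps every quaternion-specific input elementary. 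Two points in your $(5)\Rightarrow(3)$ should be spelled out in a full write-up, both of which you anticipated as the ``main obstacle'': (i) the assertions about elements outside $A^{*}$ are not literal instances of L\v{o}s, since $A^{*}$ is defined coordinatewise and not by a formula --- either verify them coordinatewise from the ultrafilter description of $A^{*}$ (immediate, since the corresponding facts hold in each $Q_{2^{n_i}}$ for elements outside $\langle x\rangle_i$), or note that $\langle x\rangle$ is uniformly definable in $Q_{2^n}$ for $n\ge 4$ as $\{g: g^{2}\ne\iota\}\cup\{h^{2}: h\in Q_{2^n}\}$; (ii) the transfer of ``there are exactly $2^{j}$ solutions of $g^{2^{j}}=1$'' requires that, for each $j$, the ultrafilter charges the indices with $n_i-1\ge j$; in the degenerate case it charges a single exponent, $Q^{*}$ is then a finite quaternion group, hence so is $G$, which is exactly the case $l=0$ of $(3)$. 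Neither point is a gap.
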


\begin{proof}
Our pattern for the proof is the following:
$$
1\Rightarrow 5\Rightarrow 4\Rightarrow 1, \ \ 1 \Leftrightarrow 3, \ \ 1 \Leftrightarrow 2
$$

$(1 \Rightarrow 5)$. Let $G$ be a limit of quaternion groups. Then by Theorem 4, there exists an ultra-filter $\mathcal{U}$ such that $G$ embeds in $\prod_{n\geq 3}Q_{2^n}/\mathcal{U}$.\\

$(5 \Rightarrow 4)$. Suppose for some ultra-filter $\mathcal{U}$, we have $G\leq \prod_{n\geq 3}Q_{2^n}/\mathcal{U}$. Then,
$$
\mathrm{Th}_{\forall}(\prod_{n\geq 3}Q_{2^n}/\mathcal{U})\subseteq \mathrm{Th}_{\forall}(G).
$$
By the theorem of L\v{o}s, we have
\begin{eqnarray*}
\mathrm{Th}_{\forall}(\prod_{n\geq 3}Q_{2^n}/\mathcal{U})&=& \lim_{\mathcal{U}}(\mathrm{Th}_{\forall}(Q_{2^n}))\\
                                                         &\supseteq&\bigcap_{n\geq 3}\mathrm{Th}_{\forall}(Q_{2^n}),
\end{eqnarray*}
and so $4$ follows.

$(4 \Rightarrow 1)$. By Theorem 3, there exists a sequence $(n_i)$ of integers and subgroups $H_i\leq Q_{2^{n_i}}$ such that for suitable markings, we have $\lim (H_i, T_i)=(G, S)$. Every $H_i$ is cyclic or quaternion. If almost all $H_i$ are cyclic then $G$ is abelian,  which is not the case. Because of convergence, almost all $H_i$ are quaternion and $1$ follows.

$(1 \Rightarrow 3)$. Suppose that $G$ is a limit of quaternion groups. Then for suitable markings, we have
$$
\lim (Q_{2^i}, T_i)=(G, T).
$$
Recall that $Q_{2^i}=(\mathbb{Z}_4\ltimes \mathbb{Z}_{2^{i-1}})/K_i$, where $K_i=\langle(2, 2^{i-2})\rangle$. Let
$$
T_i=(a_{i1}K, \ldots, a_{im}K),\ \ t_i=(2, 2^{i-2}).
$$
Then $S_i=(a_{i1}, \ldots, a_{im}, t_i)$ is a generating set for $\mathbb{Z}_4\ltimes \mathbb{Z}_{2^{i-1}}$. We have $(\mathbb{Z}_4\ltimes \mathbb{Z}_{2^{i-1}}, S_i)\in \mathcal{G}_{m+1}$ and since $\mathcal{G}_{m+1}$ is compact, so a subsequence of this later sequence is convergent, i.e. there exists a sequence $(n_i)$ and a marked group $(H, S)\in \mathcal{G}_{m+1}$, such that
$$
\lim (\mathbb{Z}_4\ltimes \mathbb{Z}_{2^{n_i-1}}, S_{n_i})=(H, S).
$$
On the other hand, in $\mathcal{G}_{m+1}$, we have $\lim (Q_{2^{n_i}}, T_{n_i}+1)=(G, T+1)$, where $T+1$ denotes $T$ extended by one extra $1$ from right (and similarly $T_{n_i}+1$). By Theorem 1, we see that $G=H/K$,  for some normal subgroup $K$. Before computing $H$, we show that $K\subseteq Z(H)$. For simplicity, we put $H_i=\mathbb{Z}_4\ltimes \mathbb{Z}_{2^{n_i-1}}$ and $S_i=S_{n_i}$. Suppose that
$$
N_i=\{ w\in \mathbb{F}_{m+1}: w(S_i)=1\},\ \ N=\{ w\in \mathbb{F}_{m+1}: w(S)=1\}.
$$
Then we have\\

1- $H_i\cong \mathbb{F}_{m+1}/N_i$ and $H\cong \mathbb{F}_{m+1}/N$.

2- $N=\liminf_i N_i$.\\

Similarly, we know that the marked group $(H_i/K_i, T_i+1)$ is corresponding to a normal subgroup $M_i$ in $\mathbb{F}_{m+1}$. We have
\begin{eqnarray*}
M_i&=&\{ w\in \mathbb{F}_{m+1}: w(T_i+1)=1\}\\
   &=&\{ w\in \mathbb{F}_{m+1}: w(S_i)\in K_i\}.
\end{eqnarray*}
By a similar argument, $(H/K, T+1)$ corresponds to
$$
M=\{ w\in \mathbb{F}_{m+1}: w(S)\in K\}.
$$
Therefore, we have
$$
\{ w\in \mathbb{F}_{m+1}: w(S)\in K\}=\liminf_i \{ w\in \mathbb{F}_{m+1}: w(S_i)\in K_i\}.
$$
Note that this description of $K$ is generally true for all cases of Theorem 1. Recall that $K_i\subseteq Z(H_i)$. We now can show that $K\subseteq Z(H)$. Let $a\in K$ and $b\in H$. There are words $w$ and $v$ such that $a=w(S)$ and $b=v(S)$. Moreover $w\in M$. So, there is a $j_0$ such that for all $i\geq j_0$, $w(S_i)\in K_i$. So, for $i\geq j_0$, we have $[w(S_i), H_i]=1$. As a special case $[w(S_i), v(S_i)]=1$. Let $R$ be the length of the commutator word $[w, v]$. Since $(H_i, S_i)\to (H, S)$, so there is $j_1$, such that for all $i\geq j_1$, two closed balls $B_R(H_i, S_i)$ and $B_R(H, S)$ are marked isomorphic. Let $j=\max\{ j_0, j_1\}$. Then for $i\geq j$,
$$
B_R(H_i, S_i)\cong B_R(H, S),\ \ [w(S_i), v(S_i)]=1.
$$
Hence, we have also $[w(S), v(S)]=1$ and this shows that $a\in Z(H)$.

It remains to determine the structure of $H$. But this is completely similar to the process of finding limits of dihedral groups in \cite{Guyot}. Hence, we know that  $H=\mathbb{Z}_4\ltimes A$, where $A$ is a finitely generated abelian group with a cyclic torsion part. As
$$
\bigcap_{n\geq 2}\mathrm{Th}_{\forall}(\mathbb{Z}_4\ltimes \mathbb{Z}_{2^n})\subseteq \mathrm{Th}_{\forall}(\mathbb{Z}_4\ltimes A),
$$
we see that for all odd prime $p$, the universal sentence
$$
\forall x (x^p=1\to x=1)
$$
which is true in all groups $\mathbb{Z}_4\ltimes \mathbb{Z}_{2^n}$, is already true in $\mathbb{Z}_4\ltimes A$. This shows that in the later group, the torsion part  is a cyclic $2$-group. Hence for some integers $l$ and $k$, we have $A=\mathbb{Z}^l\oplus \mathbb{Z}_{2^k}$. By a simple computation in semidirect product, we see that
$$
Z(H)=\{ (0,0), (2,0), (0, 2^{k-1}), (2, 2^{k-1})\}.
$$
Therefore we have five alternatives for $K$:
\begin{eqnarray*}
K&=&\{ (0,0)\},\\
K&=&\{ (0,0), (2,0)\},\\
K&=&\{ (0,0), (0, 2^{k-1})\},\\
K&=&\{ (0,0), (2, 2^{k-1})\},\\
K&=&Z(H).
\end{eqnarray*}
We will prove that the only acceptable case is $K=\{ (0,0), (2, 2^{k-1})\}$. Recall that all quaternion groups have a unique involution. This fact can be translated into a universal sentence as
$$
\forall x, y(x^2=y^2=1\to (x=1\vee y=1\vee x=y)).
$$
Since $G$ is a limit of quaternion groups, so the above sentence is also true in $G$, i.e. $G$ has a unique involution. Computation in semidirect product, reveals the following facts:\\

i- in the case one, there are at least three involutions
$$
(2,0)K, (0, 2^{k-1})K, (2, 2^{k-1})K.
$$

ii- in the second case, there are at least two involutions
$$
(1, 0)K, (0, 2^{k-1})K.
$$

iii- in the third case, there is also at least two involutions
$$
(2,0)K, (0, 2^{k-2})K.
$$

iv- in the case five, there are at least two involutions
$$
(1,0)K, (0, 2^{k-2})K.
$$

It remains only the case four where actually the resulting group has a unique involution. Summarizing, we conclude that $G$ is isomorphic to a group of the form
$$
\frac{\mathbb{Z}_4\ltimes (\mathbb{Z}^l\oplus \mathbb{Z}_{2^k})}{\langle (2, 2^{k-1})\rangle},
$$
for some integers $l$ and $k$, such that the action of $\mathbb{Z}_4$ on $\mathbb{Z}^l\oplus \mathbb{Z}_{2^k}$ is given by $x\cdot a=(-1)^xa$.

$(3 \Rightarrow 1)$. We know that the abelian group $\mathbb{Z}^l\oplus \mathbb{Z}_{2^k}$ is a limit of cyclic $2$-groups, and hence $\mathbb{Z}_4\ltimes (\mathbb{Z}^l\oplus \mathbb{Z}_{2^k})$ is a limit of groups of the form $\mathbb{Z}_4\ltimes \mathbb{Z}_{2^{n-1}}$. By Theorem 1, we have
$$
\frac{\mathbb{Z}_4\ltimes \mathbb{Z}_{2^{n-1}}}{K_n}\to \frac{\mathbb{Z}_4\ltimes (\mathbb{Z}^l\oplus \mathbb{Z}_{2^k})}{L},
$$
for some normal subgroup $L$. Again checking the number of involutions, we conclude that $L=K$.

$(2 \Rightarrow 1)$. Let $G$ be fully residually quaternion. Suppose $S$ is an arbitrary generating set for $G$. For any $R>0$, the closed ball $B_R(G,S)$ is  finite. Hence, there is a $n\geq 3$ and a homomorphism $\alpha:G\to Q_{2^n}$, such that its restriction to $B_R(G, S)$ is injective. Let $T=\alpha(S)$. Then clearly we have
$$
d((G,S), (Q_{2^n}, T))\leq e^{-R}.
$$
This shows that $G$ is a limit of quaternion groups.

$(1 \Rightarrow 2)$. By 3, the group $G$ is finitely presented and hence by Theorem 2, it is fully residually quaternion.

\end{proof}

\end{document}